
\documentclass[11 pt, onecolumn]{ieeeconf}  
\usepackage[top=2cm, bottom=2cm, inner=2.5cm, outer=2.5cm]{geometry}

\IEEEoverridecommandlockouts                              

\overrideIEEEmargins 
\usepackage{amsmath,amsfonts,amssymb}
\usepackage{mathabx,fixmath,algorithm, algpseudocode}
\usepackage{graphicx,color} 
\usepackage{lipsum}
\usepackage{cite}
\usepackage{hyperref}
\usepackage{multirow, booktabs}
\usepackage{pgfplots}

\newcommand{\qed}{\hfill $\diamond$}
\newcommand{\reals}{\mathbb{R}}

\newcommand{\Ez}{\underset{z \sim \mathcal{D}_{\mathrm{ms}}(y_k)}{\mathbb{E}}}
\newcommand{\Exi}{\underset{z \sim \mathcal{D}(y_k)}{\mathbb{E}}}
\newcommand{\Ey}{\underset{z \sim \mathcal{D}(y)}{\mathbb{E}}}
\newcommand{\Ew}{\underset{\omega \sim \mathcal{D}(y)}{\mathbb{E}}}

\newcommand{\A}{\mathbold{A}}
\newcommand{\bv}{\mathbold{b}}
\newcommand{\Prob}{\mathbb{P}}

\usepackage{palatino}

\newtheorem{theorem}{Theorem}
\newtheorem{example}{Example}

\newtheorem{assumption}{Assumption}

\title{\LARGE \bf \sf Flexible Optimization for Cyber-Physical and Human Systems}

\author{
Andrea Simonetto$^{*}$ \\ \smallskip
\small Unité de Mathématiques Appliquées, ENSTA Paris, Institut Polytechnique de Paris,\\ 91120 Palaiseau, France\\
\tt\small andrea.simonetto@ensta-paris.fr
%
\thanks{*This work was partly supported by the Agence Nationale de la Recherche (ANR) with the project ANR-23-CE48-0011-01.}
}


\begin{document}

\maketitle
\thispagestyle{empty}

\begin{abstract}
Can we allow humans to pick among different, yet reasonably similar, decisions? Are we able to construct optimization problems whose outcome are sets of feasible, close-to-optimal decisions for human users to pick from, instead of a single, hardly explainable, do-as-I-say ``optimal'' directive? 

In this paper, we explore two complementary ways to render optimization problems stemming from cyber-physical applications flexible. In doing so, the optimization outcome is a trade off between engineering best and flexibility for the users to decide to do something slightly different. The first method is based on robust optimization and convex reformulations. The second method is stochastic and inspired from stochastic optimization with decision-dependent distributions. 
\end{abstract}



\section{INTRODUCTION}

Modern cyber-physical systems, such as the smart energy grid, are becoming tightly interlocked with the end users. Optimizing the operations of such systems is also being driven to the limits, by proposing personalized solutions to each user, e.g., to regulate their energy consumption. Sometimes we refer to these highly integrated systems as cyber-physical and human systems (CPHS)~\cite[Chapter~4D]{annaswamy2023control}. 

In this paper, we ask the question of whether we can optimize these systems by still allowing the end users to have a choice between different, yet reasonably similar, decisions. This becomes key in unlocking flexibility of the optimized decision to account for transparency and ease technology adoption. To fix the ideas on a concrete example, we could refer to optimizing a building heating control, where the set temperatures are determined by an algorithm. In this paper then, we study how to build algorithms that can deliver an allowed range of potentially good temperatures to the users to choose from independently. 

On the one hand, human behavior and satisfaction modeling is a well-studied research area, and therefore optimizing a cyber-physical system with pre-trained or online-learned human models has received much attention (see, e.g.,~\cite{Chatupromwong2012, Pinsler2018, Bourgin2019, Simonetto2021, zheng2021curve, Sadowska2023, annaswamy2023control} and references therein). On the other hand, unlocking flexibility by delivering sets and \emph{not single optimal solutions} to the users to choose from is not well explored, and mostly novel in optimization. In this paper, we were mainly inspired from the pioneering works~\cite{Inoue2019, Shibasaki2020} which propose a set-delivering controller. Their analysis techniques stems from robust control and inverse optimization, which we will not use here since our setting is different. 

In this paper, we propose the following main contributions,

{\bf[1]} We propose a deterministic flexible optimization problem that can deliver to end users a set of feasible solutions to pick from. This first contribution is rooted in robust optimization and it is made general by the latest techniques~\cite{bertsimas2022robust} to derive convex reformulations;

{\bf[2]} we propose a stochastic variant of the flexible optimization problem, which is less conservative and can fine tune the human-machine interaction. To solve this problem, we propose two primal-dual methods and prove their theoretical properties. The analysis of these algorithms is made possible thanks to recent developments in stochastic optimization with decision-dependent distributions~\cite{perdomo2020performative,drusvyatskiy2023stochastic, wood2023stochastic,lin2023plug,wang2023constrained}. 

The contributions yield two complementary views in flexible optimization and we finish by proposing a complete workflow, labeled Flex-O. 

Numerical experiments showcase our theoretical development and their empirical performance. 

\section{Problem formulation}

Let $f: \reals^n \to \reals$ be a convex cost, and let $X \subseteq \reals^n$ be the convex feasible set. We model the problem we want to solve as a convex optimization problem,
\begin{equation}\label{eq:standard}
\textsf{P}_1:\quad \min_{x\in \reals^n} f(x), \qquad \textrm{subject to } x \in X,
\end{equation}
where we partition the decision variable $x = [x_1 \in \reals^{n_1}, \ldots, x_N \in \reals^{n_N}] \in \reals^n$ to highlight the presence of $N$ users. For the sake of simplicity, we will let $n_i=1$ without an over loss of generality.  

Problem~\eqref{eq:standard} is rather standard and a variety of methods exist to find the optimal decisions. Here however, we wish to modify it to allow the users to have a choice. As explained in the introduction, we would like to assign to each user, not a single decision, but a set from which they can choose from. 
We present two ways that can be used to achieve this. 

\subsection{Deterministic approach}

We start by looking at a deterministic approach. The intuition is to find the best decision $x^\star$ and an hyperbox of optimal size centered on it, so that all the points in the hyperbox are in the feasible set. This will allow us to assign to each user their component of $x^\star$ and the possible variations around it, determined by the size of the hyperbox. 

To fix the ideas, Figure~\ref{fig.1} depicts the intuition in a bi-dimensional setting. As we can see, depending on the nature of each user, we may allow for more or less flexibility. 

\begin{figure}
\centering
\includegraphics[width=0.50\textwidth]{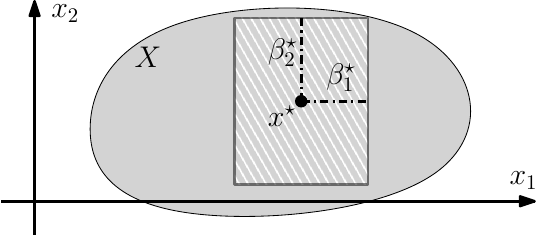}
\caption{Setting of the problem formulation in two dimensions with the best decision $x^\star$ and the optimal variations around it $\beta^\star$. We give to each user $i$ the optimal set $[x^\star_i - \beta^\star_i, x^\star_i + \beta^\star_i]$. }
\label{fig.1}
\end{figure}

We are now ready for formulating the problem mathematically. Let us introduce scalar weights $w_i\geq 0$ and new scalar flexibility variables $\beta_i \geq 0$ for each user. We also introduce an uncertain variable $z_i \in [-1,1]\subset \mathbb{R}$ for each user. We collect $\beta_i$ and $z_i$ in column vectors as $\beta = (\beta_1, \ldots, \beta_n)^\top; z = (z_1, \ldots, z_n)^\top \in [-1,1]^n$, where $[-1,1]^n$ is the unitary hyperbox. We also define $[{\bf diag}(z)]$ as a $n \times n$ matrix with $z$ as its diagonal. Then, we render $\textsf{P}_1$ flexible by solving instead the following robust optimization problem,
\begin{eqnarray}
\textsf{P}_{2,\mathrm{d}}: \,\, (x^\star, \beta^\star) \in &&\hspace*{-0.5cm}\arg\!\!\!\min_{\hspace*{-0.5cm}x \in \mathbb{R}^n, \beta_i \geq 0}\quad  f(x) - \sum_{i=1}^n w_i\beta_i  , \label{eq.p2}\\ &&\hspace*{-0.5cm}\textrm{subject to} \quad x+[{\bf diag}(z)] \beta \in X, \, \nonumber \\ &&\hspace*{3cm} \forall z \in [-1,1]^n. \nonumber
\end{eqnarray}
Problem $\textsf{P}_{2,\mathrm{d}}$ yields feasible solutions for all decisions in the hyperbox $X^\star:=\prod_{i=1}^n X^\star_i$, with $X^\star_i:=[x_i^\star - \beta_i^\star , x_i^\star + \beta_i^\star]$. The set $X^\star_i$ can be given to the user $i$ for them to decide their optimal action, \emph{independently of the other users}. Here, we are trading-off optimality of $x^\star$ while increasing the flexibility ensured by $\beta^\star$. That is, we are finding the optimal point $x^\star$ \emph{and} the maximal variation around it $\beta^\star$, which still guarantees feasibility. 

For the sake of generality, Problem~\eqref{eq.p2} can be slightly generalized into,
\begin{eqnarray}
\textsf{P}^{\varphi}_{2,\mathrm{d}}: \,\, (x^\star, \beta^\star) \in &&\hspace*{-0.5cm}\arg\!\!\!\min_{\hspace*{-0.5cm}x \in \mathbb{R}^n, \beta_i \geq 0}\quad  f(x) + \sum_{i=1}^n w_i \varphi_i(\beta_i)  , \label{eq.p2p}\\ &&\hspace*{-0.5cm}\textrm{subject to} \quad x+[{\bf diag}(z)] \beta \in X, \, \nonumber \\ &&\hspace*{3cm} \forall z \in [-1,1]^n, \nonumber
\end{eqnarray}
for any convex function $\varphi_i: \reals \to \reals$. A typical example would be $\varphi_i(\beta_i) = -\beta_i + \frac{\epsilon}{2} \beta_i^2$, furthering trading-off flexibility for the user (small $\epsilon$) and flexibility for the system designed (large $\epsilon$). 

Problems~\eqref{eq.p2} and \eqref{eq.p2p} are difficult optimization problems, for which however some approximation and reformulation procedures exist. We will discuss some of them in the following sections, and for that it is convenient to adapt slightly the notation. We let $y := [x^\top, \beta^\top]^\top$, $Y := \reals^n \times \reals^n_{+}$,  we introduce the new convex cost $g(y) := f(x) + \sum_{i=1}^n w_i\varphi_i(\beta_i) $, and rewrite $x+[{\bf diag}(z)] \beta \in X$ as the intersection of finitely many inequalities (say $m$), as
\begin{equation}
h(\A(y) z + \bv(y)) \leq 0,
\end{equation} 
for convex function $h:\reals^{n} \to \reals^m$ and affine in $y$ matrix $\A(y) = [{\bf diag}(\beta)]$ and vector $\bv(y) = x$. This reformulation is very often possible in all the applications we consider. Then the problem reads,
\begin{eqnarray}
\textsf{P}_{3,\mathrm{d}}: \,\, y^\star \in &&\hspace*{-0.5cm}\arg\!\min_{\hspace*{-0.5cm}y \in Y}\quad  g(y)  , \label{eq.p3}\\ &&\hspace*{-0.5cm}\textrm{subject to} \quad h(\A(y) z + \bv(y)) \leq 0, \, \nonumber \\ &&\hspace*{3cm} \forall z \in [-1,1]^n. \nonumber
\end{eqnarray}

\begin{example}\label{ex.1}
We consider the task of deciding the reference temperatures in different areas in an office building. Each group of users can set their thermostat in their office within an allowed range which we need to provide. Let $x \in \reals^n$ be the temperature in $n$ different areas, and $x_{\mathrm{ref}}\in \reals^n$ be the engineering-best temperatures, which have been determined via an economic welfare trade-off. The problem we would like to solve can be the following one,
\begin{eqnarray}
y^\star \in &&\hspace*{-0.5cm}\arg\!\min_{\hspace*{-0.5cm}y \in Y}\quad  \frac{1}{2}\|x\|^2 + \sum_{i=1}^n w_i \left(-\beta_i+ \frac{\epsilon}{2} \beta_i^2\right), \label{eq.ex}\\ &&\hspace*{-0.5cm}\textrm{subject to} \quad \left.\begin{array}{l}\|\A(y) z + \bv(y) - x_{\mathrm{ref}}\|^2 \leq \gamma,\\ D\, ( \A(y) z + \bv(y)) \leq e, \end{array}\right\} \nonumber \\ &&\hspace*{5cm} \forall z \in [-1,1]^n. \nonumber
\end{eqnarray}
Here, the cost represents the wish to pick the smallest possible temperature, while the constraints impose a limited deviation with $x_{\mathrm{ref}}$ via a nonnegative scalar $\gamma$, and some additional affine constraints $D \in \reals^{c \times n}, e \in \reals^{c}$. The latter ones impose additional temperature bounds, and the fact that close-by areas cannot have very different temperatures. \qed
\end{example}




\subsection{Stochastic approach}

In order to refine, and possibly render Problem~\eqref{eq.p3} less conservative, we introduce a stochastic variant. Here, we assume that the users, \emph{given a certain optimal decision $x^\star_i$ and allowed variation $\beta^\star_i$}, they pick a variable in the optimal set, say $x_i$, with a certain probability. 

In this case, by introducing a nonnegative scalar $\delta \in[0,1]$, Problem~\eqref{eq.p3} can be formulated as a chance-constrained decision-dependent non-convex problem as,
\begin{eqnarray}
\textsf{P}_{2,\mathrm{s}}: \,\, y^\star \in &&\hspace*{-0.5cm}\arg\!\min_{\hspace*{-0.5cm}y \in Y}\quad  g(y)  , \label{eq.s2}\\ &&\hspace*{-0.8cm}\textrm{subject to} \quad \Prob_{z \sim \mathcal{D}(y)}\left[h(\A(y) z + \bv(y)) > 0\right] \leq  \delta, \, \nonumber \end{eqnarray}
where $\Prob[\cdot]$ is the probability of a certain event, and $\mathcal{D}(y)$ is the decision-dependent distribution from which $z$ is drawn from, whose support we assume is $[-1,1]^n$.


The chance constraint in Problem~\eqref{eq.s2} can be conservatively rewritten in a convex-in-$h(y)$ form, by employing several standard bounds. Here, for reasons that will be clear in the algorithmic section, we need a smooth reformulation and we employ the Chernoff's bound. This allows one to write,
\begin{multline}
\Prob_{z \sim \mathcal{D}(y)}\left[h(\A(y) z + \bv(y)) > 0\right] \leq \delta \Longleftarrow \\ \mathbb{E}_{z \sim \mathcal{D}(y)}\left(\exp[h_j(\A(y) z + \bv(y))/u]\right) - \delta \leq 0, \quad \mathrm{for} \, j = 1, \ldots, m. \end{multline}
for any $u>0$; see for instance~\cite{nemirovski2007convex}, where the fact that the random variable $z$ is decision-dependent does not affect the bound reasoning.  With this in place, Problem~\eqref{eq.s2} can be reformulated as,
\begin{eqnarray}
\textsf{P}_{3,\mathrm{s}}: \,\, y^\star \in &&\hspace*{-0.5cm}\arg\!\min_{\hspace*{-0.5cm}y \in Y}\quad  g(y)  , \label{eq.s3}\\ &&\hspace*{-0.5cm}\textrm{subject to} \nonumber \\ &&\hspace*{-1.0cm}  \mathbb{E}_{z \sim \mathcal{D}(y)}\left(\exp[h_j(\A(y) z + \bv(y))/u]\right) - \delta \leq 0, \quad \mathrm{for} \, j = 1, \ldots, m. \, \nonumber 
\end{eqnarray}
Note that, contrary to the case of decision-independent distribution, the constraints in~\eqref{eq.s3} are still non-convex in $y$.  

Finally, we can write Problem~\eqref{eq.s3} by its minimax formulation,
\begin{equation}
\textsf{P}_{4,\mathrm{s}}: \,\, \min_{y \in \mathbb{R}^{2n}}\max_{\lambda \in \reals^{m}_{+}}\quad \Phi(y, \lambda):= g(y) +  \sum_{j} \lambda_j (\mathbb{E}_{z \sim \mathcal{D}(y)}\left(\exp[h_j(\A(y) z + \bv(y))/u]\right) - \delta) \label{minmax}
\end{equation}
where the variables $\lambda \in \reals^{m}_+$ are the Lagrangian multipliers associated to the constraints.

\section{SOLVING THE FLEXIBLE PROBLEM}

\subsection{Robust optimization problem}\label{sec.ro}

Several techniques exist to approximately (and conservatively) solve robust optimization problems\footnote{A naive approach would be to verify the constraints on all the vertices of the hybercube $[-1,1]^n$, but that would lead to $m 2^n$ constraints.} like~\eqref{eq.p3}. A recent framework, based on an extension of the Reformulation-Linearization-Technique, has been proposed by Bertsimas and coauthors in~\cite{bertsimas2022robust}. This framework is able to deal with any scalar convex function $h(\A(y)z + \bv(y)) \leq 0$ and any uncertainty set, and therefore it can be used here. 

For the sake of argument, we will not discuss this latest wholistic approach for the general case, but look at more standard techniques for our Example~\ref{ex.1}. E.g., one can transform Problem~\eqref{eq.ex} into the convex worst-case reformulation,
\begin{eqnarray}
(y^\star, s^\star) \in &&\hspace*{-0.5cm}\arg\!\!\min_{\hspace*{-0.5cm}y \in Y, s\in\reals^n}\quad  \frac{1}{2}\|x\|^2 + \sum_{i=1}^n w_i \left(-\beta_i+ \frac{\epsilon}{2} \beta_i^2\right), \label{eq.ex-rob}\\ &&\hspace*{-1.5cm}\textrm{subject to} \quad \|s\|^2 \leq \gamma, \,  \nonumber \\ &&\hspace*{0.25cm} s_i \geq |\beta_i| + |x_i - x_{\mathrm{ref},i}|, \, \forall i \in \{1, n\}\nonumber  \\ &&\hspace*{0.15cm} d_j x - e_j +  \|[{\bf diag}(d_j)] \beta\|_1 \leq 0,\, \forall j \in \{1, c\} \nonumber
\end{eqnarray}
see for instance~\cite{Boyd2004a, ben2009robust, duchi2018optimization} and Appendix~\ref{ap.1} for completeness, where $d_j$ are the rows of $D$ and $e_j$ are the components of $e$. 

Other techniques, such as the S-procedure, can be applied to specific cases and the reader is referred to~\cite{ben2009robust}. For the sake of this paper, we remark that finding convex reformulations to robust problems like~\eqref{eq.p3} is possible, even if somewhat conservative.  
The resolution of such reformulations, like~\eqref{eq.ex-rob}, yields the optimal decision $x^\star$, as well as the optimal interval around it $\beta^\star$. As this may be conservative, we turn to the stochastic approach to refine it.

\subsection{Stochastic optimization problem}

We start our resolution strategy by rewriting~\eqref{minmax} in the compact form,
\begin{equation}\label{st.sp}
\min_{y \in Y}\max_{\lambda \in \reals^{m}_{+}}\quad \Phi(y, \lambda):= \mathbb{E}_{z \sim \mathcal{D}(y)} [\phi (y, \lambda, z)].
\end{equation}
Problem~\eqref{st.sp} is a stochastic saddle-point problem with decision-dependent distributions, which is in general non-convex and intractable in practice since one would need a full (local) characterization of $\mathcal{D}$. 
For this class of problems, since the optimizers are out of reach, one is content to find equilibrium points, as the points that are optimal w.r.t. the distribution they induce. In particular, one would start by assuming that the search space in $y$ and $\lambda$ is compact. In our case, this would be a reasonable approximation for $y$, since we can get an educated guess of a bounded search space by solving the deterministic problem~\eqref{eq.p3} first. For $\lambda$, that would amount at clipping the multipliers, which is also a reasonable practice in convex and non-convex problems \cite{Nedic2009a, Erseghe2015}. With this in place, we let the search space for $y$ be $\mathcal{Y} \subset Y$ and $\lambda \in \mathcal{M} \subset \reals^{m}_{+}$. Then, one searches for equilibrium points, such that,
\begin{eqnarray}
\bar{y} &\in &\arg\min_{y \in \mathcal{Y}} \left\{ \max_{\lambda \in \mathcal{M}} \mathbb{E}_{z \sim \mathcal{D}(\bar{y})} [\phi (y, \lambda, z)] \right\}\\
\bar{\lambda} &\in &\arg\max_{\lambda \in \mathcal{M}} \left\{ \min_{y \in \mathcal{Y}}  \mathbb{E}_{z \sim \mathcal{D}(\bar{y})} [\phi (y, \lambda, z)] \right\}.
\end{eqnarray}

For continuous convex (in $y$) concave (in $\lambda)$ uniformly in $z$ functions $\phi$, as in our case, assuming compactness of the sets $\mathcal{Y}$ and $\mathcal{M}$, as well as a continuous distributional map $\mathcal{D}$ under Wasserstein-$1$ distance $W_1$, then we know that the set of equilibrium points is nonempty and compact~\cite[Thm~2.5]{wood2023stochastic}. Consider further the following requirements. 

\begin{assumption}\label{as.1}
\emph{(a)} Function $\phi$ is continuously differentiable over $Y \times \reals^{m}_{+}$ uniformly in $z$, as well as $\mu$-strongly-convex-strongly-concave (respectively in $y$ and $\lambda$) for all $z$. 

\emph{(b)} The stochastic gradient map $\psi(y,\lambda,z) := (\nabla_{y} \phi(y,\lambda,z), -\nabla_{\lambda} \phi(y,\lambda,z))$ is jointly $L$-Lipschitz in $(y, \lambda)$ and separately in $z$.  

\emph{(c)} The distribution map $\mathcal{D}$ is $\varepsilon$-Lipschitz with respect to the Wasserstein-$1$ distance $W_1$, i.e., 
$$
W_1(\mathcal{D}(y), \mathcal{D}(y')) \leq \varepsilon \|y-y'\|, \quad \forall y \in Y. 
\vspace*{-3mm}$$ \qed
\end{assumption}

Strong convexity is ensured by properly defining the cost function, as well as $\epsilon$, while differentiability holds thanks to the use of Chernoff's bound. If the engineering function is just convex, a regularization may be added. Strong concavity can be achieved by adding the dual regularization term $-\frac{\nu}{2}\|\lambda\|^2$, $\nu \geq \mu$, as in~\cite{Nedic2011}. 
The various Lipschitz assumptions are mild (the ones on the gradient map hold trivially under compactness of the sets $\mathcal{Y}, \mathcal{M}$ and compact support for $\mathcal{D}$ as assumed). 

With these assumptions in place, one can show that the distance between equilibrium points and optimal points of the original problem~\eqref{st.sp} is upper bounded by the constant of the problem and therefore solving for the former is a proxy for finding good approximate saddle points for the latter. 
Furthermore for $\varepsilon L /\mu <1$, then the equilibrium point is unique~\cite[Thm~2.10]{wood2023stochastic}. To find such unique equilibrium point $(\bar{y}, \bar{\lambda})$ we employ a stochastic primal-dual method by generating a sequence of points $\{{y}_k, {\lambda}_k\}$, $k = 0, 1, \ldots,$ as,
\begin{subequations}\label{pd}
\begin{eqnarray}
z_k &\sim& \mathcal{D}(y_k), \label{human} \\
y_{k+1} &= &{\mathsf P}_{\mathcal{Y}} \left[ y_k - \eta \nabla_{y}\phi (y_k, \lambda_k, z_k) \right], \\
\lambda_{k+1} &= &{\mathsf P}_{\mathcal{M}} \left[ \lambda_k + \eta \nabla_{\lambda}\phi (y_k, \lambda_k, z_k) \right],
\end{eqnarray}
\end{subequations}
with step size $\eta >0$ and projection operator ${\mathsf P}[\cdot]$. The stochastic gradient obtained by drawing $z_k$ from the distribution generated at $y_k$ is unbiased at $k$. Furthermore, we assume (as usual in the stochastic setting) that, for any given $y, \lambda$:
\begin{subequations}\label{st.setting}
\begin{equation}
\Ew[\|\nabla_{y}\phi (y, \lambda, \omega) - \Ey[\nabla_{y}\phi (y, \lambda, z)]\| \leq \frac{\sigma}{\sqrt{2}}, 
\end{equation}
\begin{equation}
\Ew[\|\nabla_{\lambda}\phi (y, \lambda, \omega) - \Ey[\nabla_{\lambda}\phi (y, \lambda, z)]\| \leq \frac{\sigma}{\sqrt{2}}, 
\end{equation}
\end{subequations}
for a nonnegative constant $\sigma$. 

Then we can derive the following result.
\begin{theorem}\label{th.1}
Let $p = [y^\top, \lambda^\top]^\top$. Let Assumption~\ref{as.1} and the stochastic setting~\eqref{st.setting} hold. Assume $ \frac{\varepsilon L}{\mu} <1$ and pick the step size $\eta$ as 
$$
\eta \in \left(0, \frac{2 (\mu - \varepsilon L)}{L^2 (1 - \varepsilon^2)}\right).
$$
Then the primal-dual method in Eq.~\eqref{pd} generates a sequence of points $\{{y}_k, {\lambda}_k\}$, such that in total expectation,
$$
\limsup_{k\to \infty} \mathbb{E}[\|p_k - \bar{p}\|] = \frac{\eta \sigma}{1 - \varrho}, 
$$
with $\varrho := \sqrt{1 - 2 \eta \mu + \eta^2 L^2} + \eta \varepsilon L <1$. \qed
\end{theorem}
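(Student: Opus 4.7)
The plan is to establish a stochastic contraction of the form $\mathbb{E}\|p_{k+1}-\bar{p}\| \leq \varrho\, \mathbb{E}\|p_k-\bar{p}\| + \eta\sigma$ and then iterate it. Introduce the deterministic averaged map $\Psi_{\tilde{y}}(p) := \mathbb{E}_{z \sim \mathcal{D}(\tilde{y})}[\psi(y,\lambda,z)]$, where $p := [y^\top,\lambda^\top]^\top$. By the equilibrium characterization recalled before the theorem statement, $\bar{p}$ is a saddle point of the problem instantiated at the frozen distribution $\mathcal{D}(\bar{y})$; equivalently, for every $\eta>0$ it obeys the fixed-point identity $\bar{p} = \mathsf{P}_{\mathcal{Y}\times\mathcal{M}}[\bar{p} - \eta\Psi_{\bar{y}}(\bar{p})]$. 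Non-expansiveness of the projection then delivers the starting inequality
\[
\|p_{k+1} - \bar{p}\| \leq \bigl\|\,p_k - \bar{p} - \eta\bigl[\psi(y_k,\lambda_k,z_k) - \Psi_{\bar{y}}(\bar{p})\bigr]\bigr\|.
\]

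The next step is to decompose the bracketed quantity, by adding and subtracting $\Psi_{y_k}(p_k)$ and $\Psi_{\bar{y}}(p_k)$, into a stochastic-noise term $\psi(y_k,\lambda_k,z_k) - \Psi_{y_k}(p_k)$, a distributional-shift term $\Psi_{y_k}(p_k) - \Psi_{\bar{y}}(p_k)$, and a deterministic monotone-operator residual $\Psi_{\bar{y}}(p_k) - \Psi_{\bar{y}}(\bar{p})$. I would peel off the first two with the triangle inequality and keep only the monotone residual inside the norm. Assumption~\ref{as.1}(a)--(b) makes $\Psi_{\bar{y}}$ both $\mu$-strongly monotone and $L$-Lipschitz, so the classical forward-step expansion yields the contractive bound $\|p_k-\bar{p}-\eta[\Psi_{\bar{y}}(p_k)-\Psi_{\bar{y}}(\bar{p})]\|^2 \leq (1-2\eta\mu+\eta^2 L^2)\|p_k-\bar{p}\|^2$. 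For the distributional-shift term, the Kantorovich--Rubinstein dual representation of $W_1$, combined with $L$-Lipschitzness of $\psi$ in $z$ and Assumption~\ref{as.1}(c), yields $\|\Psi_{y_k}(p_k) - \Psi_{\bar{y}}(p_k)\| \leq L\varepsilon\|y_k-\bar{y}\| \leq L\varepsilon\|p_k-\bar{p}\|$. Finally, the conditional expectation of the stochastic-noise term is bounded by $\sigma$ via~\eqref{st.setting}, after combining the primal and dual components with the Pythagorean identity and Jensen's inequality.

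Collecting the three bounds, taking total expectation and applying the tower property produces the advertised one-step recursion with $\varrho = \sqrt{1-2\eta\mu+\eta^2 L^2}+\eta\varepsilon L$. A short algebraic manipulation of the condition $\varrho<1$, obtained by squaring $\sqrt{1-2\eta\mu+\eta^2 L^2} < 1-\eta\varepsilon L$ and dividing by $\eta>0$, returns exactly the stated step-size window $\eta \in (0,\,2(\mu-\varepsilon L)/(L^2(1-\varepsilon^2)))$; note that this interval is contained in $(0,1/(\varepsilon L))$ whenever $\mu \leq L$, so the squaring manipulation is legitimate and $\varrho<1$ holds throughout. Iterating the one-step bound gives $\mathbb{E}\|p_k-\bar{p}\| \leq \varrho^k \|p_0-\bar{p}\| + \eta\sigma\sum_{j=0}^{k-1}\varrho^j$, and passing to the limit $k\to\infty$ produces the claimed $\eta\sigma/(1-\varrho)$ bound. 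The main obstacle I anticipate is a clean handling of the distributional-shift piece, namely lifting the scalar Kantorovich--Rubinstein duality to the \emph{vector}-valued operator $\psi$ while invoking only its Lipschitzness in $z$; the remainder is a standard strongly-monotone variational-inequality argument, whose only remaining subtlety is to correctly interleave conditional and total expectations through the three-piece decomposition.
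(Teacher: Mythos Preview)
Your proposal is correct and follows essentially the same route as the paper: a three-term decomposition into a deterministic contraction piece (bounded via $\mu$-strong monotonicity and $L$-Lipschitzness of the averaged operator to give the $\sqrt{1-2\eta\mu+\eta^2L^2}$ factor), a distributional-shift piece (bounded via Kantorovich--Rubinstein duality and Assumption~\ref{as.1}(b)--(c) to give $\eta\varepsilon L$), and a stochastic-noise piece (bounded in expectation by $\eta\sigma$ via~\eqref{st.setting}), followed by iteration of the resulting one-step recursion. The only cosmetic difference is that the paper performs the triangle-inequality split at the level of the projected map $\mathcal{G}$, whereas you apply non-expansiveness of the projection first and split at the level of the operator $\psi$; your added verification that the step-size window is exactly what makes $\varrho<1$ is a welcome detail the paper leaves implicit.
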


\begin{proof}
It follows from~\cite{wood2023stochastic} and it is reported in Appendix~\ref{ap.2}. 
\end{proof}

Theorem~\ref{th.1} tells us that if the step size is chosen sufficiently small, we can generate a sequence of points that approximates the unique equilibrium up to an error ball. The size of this ball depends on the variance of the stochastic gradient, as in many stochastic settings. A more refined characterization of the primal-dual algorithm in terms of tail distributions can be found in~\cite{wood2023stochastic}, but it is qualitatively identical. We also remark that stochastic versions of optimistic gradient descent-ascent methods~\cite{malitsky2020forward,jiang2022generalized} might be employed here, even though their convergence in decision-dependent settings is still a non-trivial open problem.  

\section{A HUMAN-ADAPTED ALGORITHM}



Both solving the robust problem~\eqref{sec.ro} and the stochastic decision-dependent variant with~\eqref{pd} have their advantages and drawbacks. The robust program offers hard guarantee on feasibility but may be conservative. The primal-dual method can be closer to reality, however it achieves feasibility only asymptotically and it requires humans to ``play'' at each iteration (since \eqref{human} is achieved by asking humans to select their $z_k$), which can be unreasonable from an user-oriented perspective (e.g., if you are asked to adjust your thermostat multiple times). 

In this section, we present a middle ground. The idea is to approximate the user's choice~\eqref{human} by a model, which we can run without asking the users to ``play''. Then, a pertinent notion of convergence will be provided in terms of the miss-match between the chosen model and the real distribution.  

We chose to model users as intelligent agents who respond to the requests by employing a best-response mechanism. This concept has been studied in economics and in game theory~\cite{Kahneman1979,lin2023plug,wood2023solving} as well as in optimization~\cite{mohajerin2018data} and control~\cite{Shibasaki2020}. The idea is to model variable $z_i$ as if it was derived from a user-dependent optimization problem: humans want to select a $z_i$ which minimizes their discomfort. Examples of such models are additive shift rules such as,
\begin{equation}\label{br-m}
    z_i \overset{d}{=} \left[\, \Psi_i(x_i,\beta_i) + \xi_{i}\right]_{-1}^{+1}, \qquad \xi_i \sim \mathcal{D}_i,
\end{equation}
where $\mathcal{D}_i$ is a static distribution, $\overset{d}{=} $ indicates equality in distribution, and $[\cdot]_{-1}^{+1}$ indicates that the distribution is then truncated to have a $[-1,1]$ support. The reasoning behind \eqref{br-m} is that the users are selecting their best $z_i$ depending on $x_i, \beta_i$, plus a static noise. The function $\Psi_i(x_i,\beta_i): \reals^{2} \to \reals$ can be thought of as a function that encodes an optimization problem parametrized by $(x_i,\beta_i)$. 


Let $\mathcal{D}_{\mathrm{ms}}(y)$ be the decision-dependent distribution induced by considering model~\eqref{br-m}. We assume that the estimated model~\eqref{br-m} is misspecified up to an error $B>0$ as follows. 

\begin{assumption}\label{as.2}{Cf. \cite{lin2023plug,wood2023solving}.}
The distribution $\mathcal{D}_{\mathrm{ms}}(y)$ is $B$-misspecified, in the sense that there exists a nonnegative scalar $B$, such that
$$
W_1(\mathcal{D}_{\mathrm{ms}}(y), \mathcal{D}(y)) \leq B, \qquad \forall y \in \mathcal{Y}. 
\vspace*{-3mm}$$ \qed
\end{assumption}

Trivial bounds for $B$ can be easily derived in our setting, since $z \in [-1,1]^n$ for both true and estimated distributions, but better bounds can also be obtained with enough data on the users. 

With this in place, we then use the misspecified model to run a deterministic model-based primal-dual method as,
\begin{subequations}\label{pd-dm}
\begin{eqnarray}
y_{k+1} = {\mathsf P}_{\mathcal{Y}} \left[ y_k - \eta \Ez \nabla_{y}\phi (y_k, \lambda_k, z) \right], \\
\lambda_{k+1} = {\mathsf P}_{\mathcal{M}} \left[ \lambda_k + \eta \Ez \nabla_{\lambda}\phi (y_k, \lambda_k, z) \right].
\end{eqnarray}
\end{subequations}

The advantage of Eq.s~\eqref{pd-dm} is that they can be run \emph{without} human intervention. Naturally, they can also be extended to a mini-batch and stochastic mode, but we do not do that here. For model~\eqref{br-m}, we need Assumption~\ref{as.1} to hold, which requires that the model is Lipschitz with respect to $y$ as,
\begin{equation}
W_1(\mathcal{D}(y), \mathcal{D}(y'))  = \|\Psi(y) - \Psi(y') \| \leq \varepsilon \|y-y'\|, \quad \forall y \in Y. 
\end{equation}

For iterations~\eqref{pd-dm}, we have the following result.

\begin{theorem}\label{th.2}
Let $p = [y^\top, \lambda^\top]^\top$. Let Assumption~\ref{as.1} and the misspecified setting of Assumption~\ref{as.2} hold. Assume $ \frac{\varepsilon L}{\mu} <1$ and pick the step size $\eta$ as 
$$
\eta \in \left(0, \frac{2 (\mu - \varepsilon L)}{L^2 (1 - \varepsilon^2)}\right).
$$
Then the model-based primal-dual method in Eq.~\eqref{pd-dm} generates a sequence of points $\{{y}_k, {\lambda}_k\}$, such that deterministically,
$$
\limsup_{k\to \infty} \|p_k - \bar{p}\| = \frac{\sqrt{2} \eta L B}{1 - \varrho}, 
$$
with $\varrho := \sqrt{1 - 2 \eta \mu + \eta^2 L^2} + \eta \varepsilon L <1$. \qed    
\end{theorem}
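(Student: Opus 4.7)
The plan is to mirror the contraction argument underlying Theorem~\ref{th.1} and add a deterministic bias term stemming from the model mismatch. Define the true and model-based primal–dual operators
\[
T(p_k) := \mathsf{P}_{\mathcal{Y}\times\mathcal{M}}\!\left[p_k - \eta\, \tilde{\psi}(p_k;\mathcal{D}(y_k))\right],\qquad T_{\mathrm{ms}}(p_k) := \mathsf{P}_{\mathcal{Y}\times\mathcal{M}}\!\left[p_k - \eta\, \tilde{\psi}(p_k;\mathcal{D}_{\mathrm{ms}}(y_k))\right],
\]
where $\tilde{\psi}(p;\mathcal{D}) := (\mathbb{E}_{z\sim\mathcal{D}}\nabla_y\phi,\; -\mathbb{E}_{z\sim\mathcal{D}}\nabla_\lambda\phi)$. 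The iterates in~\eqref{pd-dm} satisfy $p_{k+1} = T_{\mathrm{ms}}(p_k)$, while the equilibrium point $\bar p$ is a fixed point of $T$ evaluated with the true distribution at $\bar y$.

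First, I would insert the standard split
\[
\|p_{k+1} - \bar p\| \;\leq\; \|T_{\mathrm{ms}}(p_k) - T(p_k)\| + \|T(p_k) - T(\bar p)\|.
\]
The second term is exactly the quantity controlled in the proof of Theorem~\ref{th.1}: by the same strong-monotonicity/Lipschitz calculation of~\cite{wood2023stochastic} (reproduced in Appendix~\ref{ap.2}), non-expansiveness of the projection combined with Assumption~\ref{as.1} and the step-size condition $\eta \in (0, 2(\mu-\varepsilon L)/(L^2(1-\varepsilon^2)))$ yields $\|T(p_k)-T(\bar p)\| \leq \varrho\, \|p_k-\bar p\|$ with $\varrho<1$. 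This is reused verbatim.

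The main (and only genuinely new) step is bounding the bias term $\|T_{\mathrm{ms}}(p_k)-T(p_k)\|$. By non-expansiveness of the projection, it is at most $\eta\,\|\tilde{\psi}(p_k;\mathcal{D}_{\mathrm{ms}}(y_k)) - \tilde{\psi}(p_k;\mathcal{D}(y_k))\|$. Because the gradient map $\psi(y,\lambda,z)$ is $L$-Lipschitz in $z$ by Assumption~\ref{as.1}(b), Kantorovich–Rubinstein duality gives
\[
\bigl\|\mathbb{E}_{z\sim\mathcal{D}_{\mathrm{ms}}(y_k)}\nabla_y\phi(p_k,z) - \mathbb{E}_{z\sim\mathcal{D}(y_k)}\nabla_y\phi(p_k,z)\bigr\| \;\leq\; L\, W_1(\mathcal{D}_{\mathrm{ms}}(y_k),\mathcal{D}(y_k)) \;\leq\; L B,
\]
and the identical inequality for the $\lambda$-component. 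Stacking the two blocks and using $\sqrt{(LB)^2+(LB)^2}=\sqrt 2 L B$ delivers $\|T_{\mathrm{ms}}(p_k)-T(p_k)\| \leq \sqrt 2\,\eta L B$.

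Combining these pieces yields the deterministic recursion $\|p_{k+1}-\bar p\| \leq \varrho\,\|p_k-\bar p\| + \sqrt 2\,\eta L B$, and taking $\limsup$ gives the claimed bound $\sqrt 2\,\eta L B/(1-\varrho)$. The main obstacle I anticipate is the Lipschitz-in-$z$ step: Assumption~\ref{as.1}(b) provides the required Lipschitz constant, but one has to be careful that the same constant $L$ covers both the $(y,\lambda)$-Lipschitzness used in the contraction step and the $z$-Lipschitzness used in the Kantorovich–Rubinstein bound; if different constants were needed, the final expression would read $\sqrt 2\,\eta L_z B/(1-\varrho)$ with $L_z$ replacing $L$ only inside the bias term. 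Everything else is a direct transcription of the Theorem~\ref{th.1} proof with the stochastic noise term $\eta\sigma$ replaced by the deterministic mismatch term $\sqrt 2\,\eta L B$.
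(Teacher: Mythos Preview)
Your proposal is correct and matches the paper's proof essentially step for step: the paper also splits off the misspecification term $\|\mathcal{G}_{\mathrm{ms}}(p_k;p_k)-\mathcal{G}(p_k;p_k)\|$, bounds it by $\sqrt{2}\,\eta L B$ via Kantorovich--Rubinstein duality and Assumption~\ref{as.1}(b), and reuses the Appendix~\ref{ap.2} contraction argument verbatim for the remaining $\|\mathcal{G}(p_k;p_k)-\mathcal{G}(\bar p;\bar p)\|\le\varrho\|p_k-\bar p\|$. The only cosmetic difference is that the paper writes the contraction part as an explicit three-term split (separating the drift in the second argument) rather than packaging it as a single $\|T(p_k)-T(\bar p)\|$ term, but the content is identical.
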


\begin{proof}
It follows from~\cite{wood2023stochastic}, in particular by using Kantorovich and Rubinstein duality for the $W_1$ metric, as well as the Lipschitz assumption on the gradient (Cf. Assumption~\ref{as.1}-(b)), and it is reported in Appendix~\ref{ap.3}.
\end{proof}

Theorem~\ref{th.2} says the iterations~\eqref{pd-dm} converge up to an error bound, whose size is determined by the misspecification. If one used stochastic variants to estimate the average values in~\eqref{pd-dm}, one would also be able to derive bounds in expectation with extra error terms. 

\subsection{Warm-start, guarding, and rounding}

Considering~\eqref{pd-dm}, we can now describe a final human-adapted algorithm. Consider the following workflow:
\bigskip
\small
\hrule
\smallskip
{\bf FleX-O: Flexible optimization algorithm}
\smallskip
\hrule
\smallskip
\begin{enumerate}
\item Solve the robust optimization problem~\eqref{eq.p3} with the techniques of Section~\ref{sec.ro};
\item Use the solution from (1) to warm-start the iterations~\eqref{pd-dm} with an estimated model for $T$ iterations;
\item (Optional: {\bf Guarding step}) to make sure the solution is feasible $\forall z \in [-1,1]^n$, project the solution obtained from (2), say $y_T$ onto the robust feasible set, i.e., solve 
\begin{eqnarray}\label{robustproject}
\textsf{P}_{4,\mathrm{d}}: \,\, y^\star \in &&\hspace*{-0.5cm}\arg\!\min_{\hspace*{-0.5cm}y \in Y}\quad  \frac{1}{2}\|y - y_T\|^2  , \label{eq.p4}\\ &&\hspace*{-0.5cm}\textrm{subject to} \quad h(\A(y) z + \bv(y)) \leq 0, \, \nonumber \\ &&\hspace*{3cm} \forall z \in [-1,1]^n. \nonumber
\end{eqnarray}
with the techniques of Section~\ref{sec.ro};
\item (Optional: {\bf Rounding step}) to make the solution of (2) or (3) more human-friendly innerly round $\beta^\star$ to the closest precision human can achieve.
\item {\bf Output:} an optimal set $[x_i^\star - \beta_i^\star, x_i^\star + \beta^\star_i]$ for each user $i$.  
\end{enumerate}
\smallskip
\normalsize
\hrule



\section{NUMERICAL RESULTS}

To numerically illustrate the proposed approaches, we consider the setting of Example~\ref{ex.1}, where we set $n=7$. We further let the cost be:
\begin{equation}
   g(y) =  \frac{\epsilon_x}{2} \|x\|^2 + \sum_{i=1}^{7} w_i \left(-\beta_i + \frac{\epsilon_\beta}{2} \beta_i^2\right),  
\end{equation}
with $\epsilon_x = 0.001$, $\epsilon_\beta = 0.01$, the weights $w_i$ randomly drawn from the uniform distribution $\mathcal{U}(0.1,1)$. We also set $x_{\textrm{ref}}$ randomly from a normal distribution $\mathcal{N}(19.5, 1.)$. We remark that temperatures are expressed in degrees Celsius. We let $\gamma = 2n$. We consider a corridor with $n=7$ offices, and therefore $D$ is the matrix that represents the fact that two adjacent offices cannot have a very different temperature. In this case $c=6$, and we let $e = [1, \ldots, 1]^\top$. For the primal-dual methods we let the dual regularization be $\nu=0.01$, $u=1.5$, and $\delta = 0.2$. By trial-and-error, we fix the step size at $\eta=0.05$ for all the methods.  

In Table~\ref{tab.1}, we report the optimal solution of the problem obtained by the robust convex reformulation~\eqref{eq.ex-rob}. We see that the variations around the optimal ``imposed'' temperature are minimal in certain cases. We then use this solution as a warm start for the primal-dual methods. In Figure~\ref{fig.1}, we report the evolution of the primal distance $\|y_k - \bar{y}\|$ for the baseline primal-dual~\eqref{pd} {\bf [B-PD]}, for the misspecified-model-based primal-dual~\eqref{pd-dm} {\bf [MS-PD]}, and for Flex-O with a guarding step~\eqref{robustproject} at $T = 50, 500, 5000$, {\bf [Flex-O]}. In all the cases, a nearly optimal value $\bar{y}$ is computed as follows. First we model the true \emph{but unknown} distribution $\mathcal{D}(y)$ as,
\begin{equation}\label{truemodel}
z_i \overset{d}{=} \left[\xi_i + \left\{ \begin{array}{lr}
    \beta_i (19.0-x_i) & \textrm{if } x_i \leq 19.0 \\
    -\beta_i (x_i - 20.5) & \textrm{if } x_i \geq 20.5 \\
    0 & \textrm{otherwise} 
\end{array} \right. \right]_{-1}^{+1}, 
\end{equation}
with $\xi_i \sim \mathcal{N}(0,0.1)$. Then $\bar{y}$ is computed by running the model-based primal-dual~\eqref{pd-dm} on the true distribution~\eqref{truemodel}. For the misspecified model, we instead take the deterministic,
\begin{equation}
z_i = \max\{-1,\min\{1, -\beta_i (x_i - 19.75)\}\},
\end{equation}
which induces the distribution $\mathcal{D}_{\mathrm{ms}}(y)$. 

\begin{figure}
\centering
\resizebox{0.60\textwidth}{!}{\input{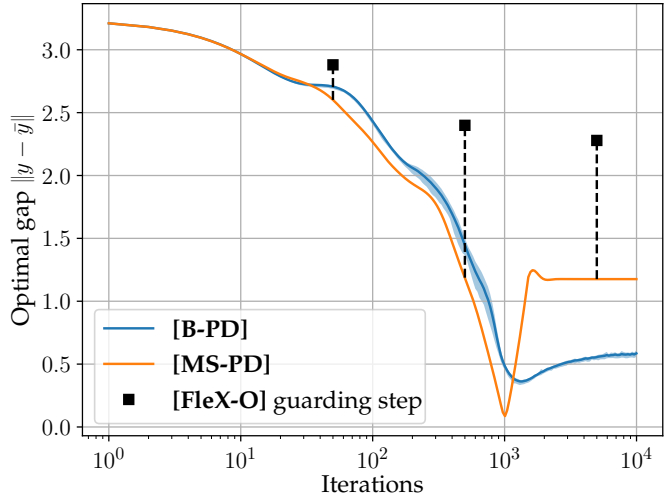}}
\caption{Optimality gap vs. iterations for the considered algorithms. For {\bf [B-PD]} we indicate mean and standard deviation over $50$ realizations. }
\label{fig.1}
\end{figure}

Model~\eqref{truemodel} indicates the natural propensity to accept the proposed optimal solution if it is within a suitable temperature range, while reacting if it falls outside. The strength of the reaction depends on the allowed variations. 

As we observe in Figure~\ref{fig.1}, both {\bf [B-PD]} and {\bf [MS-PD]} reduce the optimal gap, eventually reaching the error bound. For {\bf [B-PD]} we have averaged the solution over $50$ realizations. We also see the effect of the guarding step on {\bf [Flex-O]}, which makes the solution less optimal w.r.t. $\bar{y}$. 

More interestingly, if we observe the last iterate values in Table~\ref{tab.1}, we see how the proposed primal-dual algorithms offer more flexibility (i.e., higher values of $\beta$). In the table $\langle \mathrm{CV}(z) \rangle$ indicate the average value over the last $100$ iterations of the constraint violation,
$$
\max_{j} \{\mathbb{E}_{z \sim \mathcal{D}(y)}\left[ h_j(\A(y) z + \bv(y))\right]\}. 
$$
From the results, we can appreciate the importance of having a good model for optimality. 
For {\bf [Flex-O]}, we see how projecting onto the robust set trades-off flexibility with robustness. We note that even a few steps $\sim50$ of the primal-dual can unlock more flexible solutions.

\begin{table}
\renewcommand{\arraystretch}{1.1}
    \caption{Solutions of different algorithms on the test problem. }
    \label{tab.1}
    \centering
    \scalebox{0.9}{
\begin{tabular}{ccc}
\toprule Algorithm & Found $x$, found $\beta$ \\
\toprule Robust~\eqref{eq.ex-rob} & 
$\begin{array}{c} x^\star = [19.4, 19.4, 18.8, 18.3, 18.3, 18.3, 18.3] \\ 
\beta^\star = [1.0, 0.0, 0.5, 0.0, 1.0, 0.0, 1.0]\end{array}$ \\
\midrule True $\mathcal{D}(y)$ p.-d.~\eqref{pd-dm} & 
$\begin{array}{c} \bar{x} =  [17.7, 17.7, 17.7, 17.7, 17.7, 17.7, 17.7] \\
\bar{\beta} =  [1.0, 1.0, 1.0, 1.0, 1.0, 1.0, 1.0] \\ 
\langle \mathrm{CV}(z)\rangle = 0.023 \end{array}$\\ \midrule
{\bf [MS-PD]}~\eqref{pd-dm} &$\begin{array}{c} x =  [17.3, 17.3, 17.3, 17.3, 17.3, 17.3, 17.3] \\
\beta =  [1.0, 1.0, 1.0, 1.0, 1.0, 1.0, 1.0]\\ 
\langle \mathrm{CV}(z)\rangle =  -0.450\end{array}$\\\midrule
{\bf [B-PD]}~\eqref{pd} &$\begin{array}{c} x =  [18.0, 17.9, 17.9, 17.9, 17.9, 17.9, 17.9] \\
\beta =  [1.0, 0.9, 0.9, 0.9, 0.9, 0.9, 0.9]\\ 
\langle \mathrm{CV}(z)\rangle =  -0.018\end{array}$\\\midrule
{{\bf [Flex-O]}} at $T=50$ &
$\begin{array}{c} x =  [19.1, 19.1, 18.6, 18.3, 18.3, 18.4, 18.3]\\
\beta =  [0.7, 0.2, 0.3, 0.3, 0.6, 0.3, 0.5] \end{array}$ \\
{\color{white}{\bf [Flex-O]}} at $T=500$ &
$\begin{array}{c} x = [18.8, 18.5, 18.3, 18.1, 18.1, 18.4, 18.1]\\
\beta =  [0.3, 0.3, 0.3, 0.4, 0.5, 0.1, 0.5] \end{array}$ \\
{\color{white}{\bf [Flex-O]}} at $T=5000$ &
$\begin{array}{c} x = [18.5, 18.3, 17.9, 17.7, 17.7, 18.2, 17.8]\\
\beta =  [0.0, 0.0, 0.3, 0.4, 0.5, 0.0, 0.5] \end{array}$ \\
\bottomrule
\end{tabular}}
\end{table}

\section{CONCLUSIONS}

We have formulated flexible optimization problems that yield optimal decisions and per-user optimal variations around them. This allows the user to be given sets of possible decisions to take. The algorithms are based on robust optimization and stochastic decision-dependent distribution programs and they have been analyzed in theory and on a simple numerical example. Future research will look at how to remove some assumptions and use optimistic primal-dual methods, as well as the links between this work and set-valued optimization~\cite{khan2016set} as well as decision-dependent distributionally robust optimization~\cite{luo2020distributionally}. 

\section*{ACKNOWLEDGMENT}

The author thanks Killian Wood and Emiliano Dall'Anese for the careful read of an initial draft of the paper, and for the many useful suggestions for improvements.

\bibliographystyle{ieeetr}
\bibliography{PaperCollection00}

\appendices

\section{Derivation of Problem~\eqref{eq.ex-rob}}\label{ap.1}

We derive~\eqref{eq.ex-rob} following~\cite{duchi2018optimization}. In particular, for the constraints, we compute the worst-case scenario. Start with 
\begin{equation}
D (\A(y) z + b(y)) \leq e, \quad \forall z \in [-1,1]^n.
\end{equation} 
Component-wise, 
\begin{equation}
d_j x - e_j + d_j [{\bf{diag}}(\beta)] z \leq 0  \quad \forall z \in [-1,1]^n,
\end{equation}
so that the worst case is,
\begin{equation}
d_j x - e_j + \|[{\bf{diag}}(d_j)]\beta\|_1 \leq 0.
\end{equation}

As for the other constraint,
\begin{equation}
\|\A(y) z + \bv(y) - x_{\mathrm{ref}}\|^2 \leq \gamma , \quad \forall z \in [-1,1]^n,
\end{equation}
let $s = \A(y) z + \bv(y) - x_{\mathrm{ref}}$, and as such, 
\begin{equation}
s_i = \beta_i z_i + x_i - x_{\mathrm{ref},i},
\end{equation}
and the worst case is,
\begin{equation}
s_i \geq |\beta_i| + |x_i - x_{\mathrm{ref},i}|,
\end{equation}
from which~\eqref{eq.ex-rob}.

\section{Proof of Theorem~\ref{th.1}}\label{ap.2}

Consider the deterministic primal-dual method,
\begin{subequations}\label{pd-d}
\begin{eqnarray}
\check{y}_{k+1} &= &{\mathsf P}_{\mathcal{Y}} \left[ \check{y}_k - \eta \mathbb{E}_{z \in \mathcal{D}(\check{y}_k)}\nabla_{y}\phi (\check{y}_k, \check{\lambda}_k, z) \right], \\
\check{\lambda}_{k+1} &= &{\mathsf P}_{\mathcal{M}} \left[ \check{\lambda}_k + \eta \mathbb{E}_{z \in \mathcal{D}(\check{y}_k)}\nabla_{\lambda}\phi (\check{y}_k, \check{\lambda}_k, z) \right],
\end{eqnarray}
\end{subequations}
which is the deterministic version of~\eqref{pd}, where we have substituted the stochastic gradients with their expectation. Under $\varepsilon L /\mu <1$, for~\cite[Prop.~2.12]{wood2023stochastic}, the fixed point of~\eqref{pd-d} is the unique equilibrium point $(\bar{y}, \bar{\lambda})$. Compactly write~\eqref{pd-d} as, 
\begin{equation}
\check{p}_{k+1} = \mathcal{G}(\check{p}_{k}; \check{p}_k), 
\end{equation}
where we have indicated as $\check{p} = [\check{y}^\top, \check{\lambda}^\top]^\top$, and in the map $\mathcal{G}(\check{p}_{k}; \check{p}_k)$ the second argument represents the dependence of $z$ on $\check{p}_k$. In the same way, we write~\eqref{pd} as,
\begin{equation}
{p}_{k+1} = \tilde{\mathcal{G}}({p}_{k}; {p}_k). 
\end{equation}
Then, for the Triangle inequality,
\begin{multline}
\|p_{k+1} - \bar{p}\| = \|\tilde{\mathcal{G}}(p_{k}; p_k) - \mathcal{G}(\bar{p}; \bar{p})\| \leq \|\mathcal{G}(p_{k}; \bar{p}) - \mathcal{G}(\bar{p}; \bar{p})\| + \\ + \|\mathcal{G}(p_{k}; p_k) - \mathcal{G}(p_k; \bar{p})\| + \|\tilde{\mathcal{G}}(p_{k}; p_k) - \mathcal{G}(p_{k}; p_k)\|.  
\end{multline}

For Assumption~\ref{as.1}-(a) and (b), the gradient map $\psi$ is $\mu$-monotone and $L$-Lipschitz in $(y,\lambda)$ \cite{wood2023stochastic}. As such, we can bound the first right-hand term as,
\begin{equation}
\|\mathcal{G}(p_{k}; \bar{p}) - \mathcal{G}(\bar{p}; \bar{p})\| \leq \sqrt{1 - 2 \eta \mu + \eta^2 L^2} \|p_k - \bar{p}\|.
\end{equation}
For Assumption~\ref{as.1}-(b-c) and~\cite[Lemma 2.9]{wood2023stochastic}, the second right-hand term becomes,
\begin{equation}
 \|\mathcal{G}(p_{k}; p_k) - \mathcal{G}(p_k; \bar{p})\| \leq \eta \varepsilon L \| p_k - \bar{p}\|. 
\end{equation}
Passing now in total expectation, and defining $\varrho =\sqrt{1 - 2 \eta \mu + \eta^2 L^2} +  \eta \varepsilon L$,
\begin{multline}\label{error}
\mathbb{E}[\|p_{k+1} - \bar{p}\|] \leq \varrho \mathbb{E}[\|p_{k} - \bar{p}\|] + \mathbb{E}[\|\tilde{\mathcal{G}}(p_{k}; p_k) - \mathcal{G}(p_{k}; p_k)\|] \leq \\ \underbrace{\leq}_{\eqref{st.setting}} \varrho \mathbb{E}[\|p_{k} - \bar{p}\|] + \eta\sigma.  
\end{multline}
Finally, by iterating~\eqref{error}, the result is proven.

\section{Proof of Theorem~\ref{th.2}}\label{ap.3}

The iterations~\eqref{pd-dm} are a deterministic primal-dual method which is misspecified. We can use the reasoning of Appendix~\ref{ap.2}, to write~\eqref{pd-dm} as
\begin{equation}
{p}_{k+1} = {\mathcal{G}}_{\mathrm{ms}}({p}_{k}; {p}_k). 
\end{equation}
Then, by using the triangle inequality,
\begin{multline}
\|p_{k+1} - \bar{p}\| = \| {\mathcal{G}}_{\mathrm{ms}}(p_{k}; p_k) - \mathcal{G}(\bar{p}; \bar{p})\| \leq \|\mathcal{G}(p_{k}; \bar{p}) - \mathcal{G}(\bar{p}; \bar{p})\| + \\ + \|\mathcal{G}(p_{k}; p_k) - \mathcal{G}(p_k; \bar{p})\| + \| {\mathcal{G}}_{\mathrm{ms}}(p_{k}; p_k) - \mathcal{G}(p_{k}; p_k)\|.  
\end{multline}

The only term left to bound is the rightmost term. We can bound it as, 
\begin{multline}\label{bound1}
\| {\mathcal{G}}_{\mathrm{ms}}(p_{k}; p_k) - \mathcal{G}(p_{k}; p_k)\| \leq \\ \eta\left\|\left[\begin{array}{c} \Ez \nabla_y \phi(y_k, \lambda_k, z) - \Exi \nabla_y \phi(y_k, \lambda_k, z) \\ 
\Ez \nabla_{\lambda} \phi(y_k, \lambda_k, z) - \Exi \nabla_{\lambda} \phi(y_k, \lambda_k, z)
\end{array}\right] \right\|.  
\end{multline}
By using now the same arguments of~\cite[Lemma~2.9]{wood2023stochastic}, namely Kantorovich and Rubinstein duality for the $W_1$ metric, as well as the Lipschitz assumption on the gradient (Cf. Assumption~\ref{as.1}-(b)), we can write,
\begin{equation}
\eqref{bound1} \leq \sqrt{2} \eta L W_1 (\mathcal{D}_{\mathrm{ms}(y)}, \mathcal{D}(y)) \leq \sqrt{2} \eta  L B,     
\end{equation}
where in the last inequality, we have used Assumption~\ref{as.2}. This yields,
\begin{equation}
\|p_{k+1} - \bar{p}\| \leq \varrho \|p_{k} - \bar{p}\| + \sqrt{2} \eta  L B, 
\end{equation}
from which the thesis follows. 

\end{document}